\newtheorem{theorem}{Theorem}[section]
\newtheorem{lemma}[theorem]{Lemma}
\theoremstyle{definition}
\theoremstyle{remark}
\begin{document}
\setcounter{page}{1}

\title[Further Inequalities Associated with the Classical Gamma Function]{Further Inequalities Associated with the Classical Gamma Function}

\author[Kwara Nantomah]{Kwara Nantomah$^{*}$$^1$ }

\address{$^{1}$ Department of Mathematics, University for Development Studies, Navrongo Campus, P. O. Box 24, Navrongo, UE/R, Ghana. }
\email{\textcolor[rgb]{0.00,0.00,0.84}{mykwarasoft@yahoo.com, knantomah@uds.edu.gh}}


\subjclass[2010]{33B15, 26A48.}

\keywords{Gamma function, psi function, Inequality, generalization.}

\date{Received: xxxxxx; Revised: yyyyyy; Accepted: zzzzzz.
\newline \indent $^{*}$ Corresponding author}

\begin{abstract}
In this paper, we present some double inequalities involving certain ratios of the Gamma function. These results are further generalizations of several previous results. The approach is based on the monotonicity properties of some functions involving the the generalized Gamma functions. At the end, we pose some open problems.
\end{abstract} \maketitle

\section{Introduction and Discussion}
\noindent
Inequalities involving the classical Euler's Gamma function has gained the attension of researchers all over the world. Recent advances in this area  include those inequalities involving  ratios of the Gamma function. In \cite{Nantomah-Iddrisu-2014-aug} - \cite{K-Nantomah-14-12-aug},  the authors presented some interesting inequalities concerning such ratios, as well as some generalizations.    By utilizing similar techniques, this paper seeks to  present some new results generalizing the results of \cite{Nantomah-Iddrisu-2014-aug} - \cite{K-Nantomah-14-12-aug}. At the end, we pose some open problems involving the generalized psi functions. In the sequel, we recall some basic definitions concerning the Gamma function and its generalizations. These definitions are required in order to establish our results.\\


\noindent
The well-known classical Gamma function, $\Gamma(t)$ and the classical psi or digamma function, $\psi(t)$ are usually defined for $t>0$ by:
\begin{equation*}\label{eqn:gamma-and-digamma}
\Gamma(t)=\int_0^\infty e^{-x}x^{t-1}\,dx  \quad \text{and} \quad \psi(t)=\frac{d}{dt}\ln \Gamma(t) =\frac{\Gamma'(t)}{\Gamma(t)}.
\end{equation*}

\noindent
The $p$-Gamma function, $\Gamma_p(t)$  and the $p$-psi function, $\psi_p(t)$ are also defined for $p\in N$ and $t>0$ by:
\begin{equation*}\label{eqn:p-gamma-and-p-digamma}
\Gamma_p(t)=\frac{p!p^t}{t(t+1) \dots (t+p)} \quad \text{and} \quad \psi_p(t)=\frac{d}{dt}\ln \Gamma_p(t) =\frac{\Gamma_p'(t)}{\Gamma_p(t)}
\end{equation*}
where \,  $\Gamma_p(t) \rightarrow \Gamma(t)$ \,  and  \,   $\psi_p(t) \rightarrow \psi(t)$ as $p \rightarrow \infty$. For more information on this function, see \cite{Krasniqi-Mansour-Shabani-2010} and the references therein. \\

\noindent
Also, the $q$-Gamma function, $\Gamma_q(t)$  and the $q$-psi function, $\psi_q(t)$ are defined for $q\in(0,1)$ and $t>0$ by:
\begin{equation*}\label{eqn:q-gamma-and-q-digamma}
\Gamma_q(t)=(1-q)^{1-t}\prod_{n=1}^{\infty}\frac{1-q^n}{1-q^{t+n}} \quad \text{and} \quad \psi_q(t)=\frac{d}{dt}\ln \Gamma_q(t) =\frac{\Gamma_q'(t)}{\Gamma_q(t)}
\end{equation*}
where \,  $\Gamma_q(t) \rightarrow \Gamma(t)$ \,  and  \,   $\psi_q(t) \rightarrow \psi(t)$ as $q \rightarrow 1^-$. See also \cite{Mansour-2008}.\\

\noindent
D\'{i}az and  Pariguan \cite{Diaz-Pariguan-2007} in 2007 further defined the $k$-Gamma function, $\Gamma_k(t)$  and the $k$-psi function, $\psi_k(t)$ for $k>0$ and $t>0$ as follows: 
\begin{equation*}\label{eqn:k-gamma-and-k-digamma}
\Gamma_k(t)=\int_0^\infty e^{-\frac{x^k}{k}}x^{t-1}\,dx \quad \text{and} \quad \psi_k(t)=\frac{d}{dt}\ln \Gamma_k(t) =\frac{\Gamma_k'(t)}{\Gamma_k(t)}
\end{equation*}
where \,  $\Gamma_k(t) \rightarrow \Gamma(t)$ \,  and  \,   $\psi_k(t) \rightarrow \psi(t)$ as $k \rightarrow 1$.\\

\noindent
Similarly in 2005, D\'{i}az and Teruel \cite{Diaz-Teruel-2005} defined the  $(q,k)$-Gamma and the $(q,k)$-psi functions for $q\in(0,1)$, $k>0$ and $t>0$ by:
\begin{equation*}\label{eqn:(q,k)-gamma-and-(q,k)-digamma}
\Gamma_{(q,k)}(t)=\frac{  (1-q^{k})_{q,k}^{\frac{t}{k}-1}}  {(1-q)^{\frac{t}{k}-1}}
\quad \text{and} \quad \psi_{(q,k)}(t)=\frac{d}{dt}\ln \Gamma_{(q,k)}(t) =\frac{\Gamma'_{(q,k)}(t)}{\Gamma_{(q,k)}(t)}
\end{equation*}
where \,  $(t)_{n,k}=t(t+k)(t+2k) \dots (t+(n-1)k)=\prod _{j=0}^{n-1}(t+jk)$ \, is the $k$-generalized Pochhammer symbol and  \, $\Gamma_{(q,k)}(t) \rightarrow \Gamma(t)$, \,  $\psi_{(q,k)}(t) \rightarrow \psi(t)$ as $q \rightarrow 1^-$,  $k \rightarrow 1$.\\

\noindent
Furthermore in 2012, Krasniqi and Merovci \cite{Krasniqu-Merovci-2012} defined the $(p,q)$-Gamma and the $(p,q)$-psi functions for $p\in N$, $q\in(0,1)$ and $t>0$ by: 
\begin{equation*}\label{eqn:(p,q)-gamma-and-(p,q)-digamma}
\Gamma_{(p,q)}(t)=\frac{[p]_{q}^{t}[p]_{q}!}{[t]_{q}[t+1]_{q}\dots[t+p]_{q}} \quad \text{and} \quad \psi_{(p,q)}(t)=\frac{d}{dt}\ln \Gamma_{(p,q)}(t) =\frac{\Gamma'_{(p,q)}(t)}{\Gamma_{(p,q)}(t)}
\end{equation*}
where \, $[p]_{q}=\frac{1-q^p}{1-q}$, \,and\,  $\Gamma_{(p,q)}(t) \rightarrow \Gamma(t)$, \, $\psi_{(p,q)}(t) \rightarrow \psi(t)$ as $p \rightarrow \infty$,  $q \rightarrow 1^-$.\\

\noindent
As defined above, the generalized psi functions $\psi_p(t)$, $\psi_q(t)$, $\psi_k(t)$, $\psi_{(p,q)}(t)$ and $\psi_{(q,k)}(t)$ possess the following series characterizations (see also \cite{K-Nantomah-14-malaya}, \cite{K-Nantomah-14-12-aug} and the references therein).
\begin{align}
\psi_p(t)&= \ln p - \sum_{n=0}^{p}\frac{1}{n+t}  \label{eqn:series-p-psi-further}\\
\psi_q(t)&= -\ln(1-q) + \ln q \sum_{n=1}^{\infty}\frac{q^{nt}}{1-q^{n}}  \label{eqn:series-q-psi-further}\\
\psi_k(t)&=\frac{\ln k-\gamma}{k}-\frac{1}{t}+\sum_{n=1}^{\infty} \frac{t}{nk(nk+t)}  \label{eqn:series-k-psi-further}\\
\psi_{(p,q)}(t)&= \ln[p]_q  + (\ln q)\sum_{n=1}^{p}\frac{q^{nt}}{1-q^{n}}  \label{eqn:series-(p,q)-psi-further}\\
\psi_{(q,k)}(t)&=\frac{-\ln(1-q)}{k} + (\ln q)\sum_{n=1}^{\infty}\frac{q^{nkt}}{1-q^{nk}} \label{eqn:series-(q,k)-psi-further}
\end{align}
with $\gamma=\lim_{n \rightarrow \infty} \left( \sum_{k=1}^{n}\frac{1}{k}- \ln n \right)= 0.577215664...$ denoting the Euler-Mascheroni's constant.\\

\section{Results}
\noindent
We now present our results. Let us begin with the following Lemmas pertaining to our results.
\begin{lemma}\label{lem-gen:q-psi_minus_(p,q)-psi-further}
Assume that $\lambda \geq \mu>0$,  $p\in N$, $q\in(0,1)$ and $g(t)>0$. Then,
\begin{equation*}\label{eqn-gen:q-psi_minus_(p,q)-psi-further} 
\lambda \ln(1-q) +\mu \ln[p]_{q}+ \lambda \psi_q(g(t)) - \mu \psi_{(p,q)}(g(t)) \leq0.
\end{equation*}
\end{lemma}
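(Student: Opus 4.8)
The plan is to argue directly from the series characterizations \eqref{eqn:series-q-psi-further} and \eqref{eqn:series-(p,q)-psi-further}, since the asserted bound is pointwise and no monotonicity in $t$ is needed; the hypothesis $g(t)>0$ serves only to keep the arguments inside the domain of the functions. First I would introduce the auxiliary function
\[
f(s) = \lambda \ln(1-q) + \mu \ln[p]_{q} + \lambda \psi_q(s) - \mu \psi_{(p,q)}(s), \qquad s>0,
\]
and substitute the two series. The term $\lambda\ln(1-q)$ cancels the constant $-\lambda\ln(1-q)$ produced by $\lambda\psi_q(s)$, and the term $\mu\ln[p]_q$ cancels the constant $\mu\ln[p]_q$ produced by $-\mu\psi_{(p,q)}(s)$, leaving
\[
f(s) = (\ln q)\left[\lambda\sum_{n=1}^{\infty}\frac{q^{ns}}{1-q^{n}} - \mu\sum_{n=1}^{p}\frac{q^{ns}}{1-q^{n}}\right].
\]

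Next I would bound the bracketed quantity from below. For $q\in(0,1)$ and $s>0$ one has $0<q^{n}<1$, hence $1-q^{n}>0$ and every summand $\frac{q^{ns}}{1-q^{n}}$ is strictly positive; this also makes both series absolutely convergent, so the infinite sum may be split as $\sum_{n=1}^{p}+\sum_{n=p+1}^{\infty}$. Therefore the bracket equals
\[
(\lambda-\mu)\sum_{n=1}^{p}\frac{q^{ns}}{1-q^{n}} + \lambda\sum_{n=p+1}^{\infty}\frac{q^{ns}}{1-q^{n}} \;\geq\; 0,
\]
using $\lambda\geq\mu>0$. Since $q\in(0,1)$ gives $\ln q<0$, multiplying this nonnegative bracket by $\ln q$ reverses the sign and yields $f(s)\leq 0$ for every $s>0$. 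Finally I would put $s=g(t)$, which is legitimate precisely because $g(t)>0$, obtaining the claimed inequality.

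The computation is entirely routine, so there is no genuine obstacle; the only point demanding care is the bookkeeping of signs, because $\ln q$ is negative and thus each multiplication by $\ln q$ reverses the direction of the inequality. A secondary detail worth recording explicitly is that the termwise comparison of the two series is valid only because all terms are nonnegative, which is what guarantees that truncating the infinite series can only decrease its value.
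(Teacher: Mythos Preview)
Your argument is correct and follows exactly the same route as the paper: substitute the series representations \eqref{eqn:series-q-psi-further} and \eqref{eqn:series-(p,q)-psi-further} to cancel the constants and obtain $(\ln q)\bigl[\lambda\sum_{n=1}^{\infty}\frac{q^{ns}}{1-q^{n}}-\mu\sum_{n=1}^{p}\frac{q^{ns}}{1-q^{n}}\bigr]$, then observe this is $\leq 0$. In fact you supply more detail than the paper does, since the paper simply writes the identity and asserts the inequality without spelling out the split $\sum_{n=1}^{p}+\sum_{n=p+1}^{\infty}$ or the sign of $\ln q$.
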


\begin{proof}
From  ~(\ref{eqn:series-q-psi-further}) and ~(\ref{eqn:series-(p,q)-psi-further}) we have,
\begin{multline*} 
\lambda \ln(1-q) +\mu \ln[p]_{q} + \lambda \psi_q(g(t)) - \mu \psi_{(p,q)}(g(t)) \\= (\ln q) \left[ \lambda \sum_{n=1}^{\infty}\frac{q^{ng(t)}}{1-q^{n}} - \mu \sum_{n=1}^{p}\frac{q^{ng(t)}}{1-q^{n}} \right] \leq0.
\end{multline*}
\end{proof}

\begin{lemma}\label{lem-gen:q-psi_minus_(q,k)-psi-further}
Assume that $\lambda \geq \mu>0$,  $q\in(0,1)$, $k\geq1$ and $g(t)>0$. Then,
\begin{equation*}\label{eqn-gen:q-psi_minus_(q,k)-psi-further} 
\lambda \ln(1-q) - \mu \frac{\ln(1-q) }{k}+ \lambda \psi_q(g(t)) - \mu \psi_{(q,k)}(g(t)) \leq 0 .
\end{equation*}
\end{lemma}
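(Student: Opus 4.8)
The plan is to mimic the proof of Lemma~\ref{lem-gen:q-psi_minus_(p,q)-psi-further} verbatim, substituting the series \eqref{eqn:series-(q,k)-psi-further} for the $(q,k)$-psi function in place of the $(p,q)$ series. First I would substitute the closed forms \eqref{eqn:series-q-psi-further} and \eqref{eqn:series-(q,k)-psi-further} into the left-hand side. The two leading constants $-\lambda\ln(1-q)$ (from $\lambda\psi_q$) and $+\mu\frac{\ln(1-q)}{k}$ (from $-\mu\psi_{(q,k)}$) exactly cancel the prescribed constants $\lambda\ln(1-q)-\mu\frac{\ln(1-q)}{k}$ in the statement, leaving only the summation terms. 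This reduces the inequality to showing
\begin{equation*}
(\ln q)\left[\lambda\sum_{n=1}^{\infty}\frac{q^{ng(t)}}{1-q^{n}}-\mu\sum_{n=1}^{\infty}\frac{q^{nkg(t)}}{1-q^{nk}}\right]\leq 0.
\end{equation*}

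Since $q\in(0,1)$ we have $\ln q<0$, so it suffices to prove that the bracketed quantity is nonnegative. Each summand $\frac{q^{mg(t)}}{1-q^{m}}$ is positive for every positive exponent parameter (here $m=n$ or $m=nk$), because $g(t)>0$ forces $q^{mg(t)}>0$ and $0<q^m<1$ forces $1-q^m>0$. The key comparison is term-by-term: for each $n\geq 1$, since $k\geq 1$ we have $nk\geq n$, hence $q^{nk}\leq q^{n}$ (as $q<1$), and also $q^{nkg(t)}\leq q^{ng(t)}$; therefore
\begin{equation*}
0<\frac{q^{nkg(t)}}{1-q^{nk}}\leq\frac{q^{ng(t)}}{1-q^{n}}.
\end{equation*}

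Combining this with $\lambda\geq\mu>0$ gives $\mu\,\frac{q^{nkg(t)}}{1-q^{nk}}\leq\lambda\,\frac{q^{ng(t)}}{1-q^{n}}$ for every $n$, and summing over $n\geq 1$ yields $\mu\sum_{n=1}^{\infty}\frac{q^{nkg(t)}}{1-q^{nk}}\leq\lambda\sum_{n=1}^{\infty}\frac{q^{ng(t)}}{1-q^{n}}$, so the bracket is nonnegative and, multiplied by $\ln q<0$, the whole expression is $\leq 0$. The only mild subtlety — the ``hard part,'' though it is routine — is justifying that both infinite series converge so that the termwise inequality can be summed; this follows from the geometric majorant $\frac{q^{ng(t)}}{1-q^{n}}\leq\frac{q^{ng(t)}}{1-q}$ and $q^{g(t)}<1$, which is presumably already implicit in the validity of the series representation \eqref{eqn:series-q-psi-further}, so I would simply remark on convergence in passing rather than dwell on it.
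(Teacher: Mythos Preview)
Your proof is correct and follows exactly the same approach as the paper: substitute the series representations~\eqref{eqn:series-q-psi-further} and~\eqref{eqn:series-(q,k)-psi-further}, cancel the constant terms, and conclude that $(\ln q)\sum_{n\ge1}\bigl[\lambda\,q^{ng(t)}/(1-q^{n})-\mu\,q^{nkg(t)}/(1-q^{nk})\bigr]\le 0$. The only difference is that the paper simply asserts this last inequality without further comment, whereas you supply the term-by-term justification (using $k\ge 1$ and $\lambda\ge\mu$) and a remark on convergence; your version is the more complete of the two.
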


\begin{proof}
From  ~(\ref{eqn:series-q-psi-further}) and ~(\ref{eqn:series-(q,k)-psi-further}) we have,
\begin{multline*}
\lambda \ln(1-q) - \mu \frac{\ln(1-q) }{k}+ \lambda \psi_q(g(t)) - \mu \psi_{(q,k)}(g(t)) \\ = (\ln q) \sum_{n=1}^{\infty} \left[ \lambda \frac{q^{ng(t)}}{1-q^{n}} - \mu \frac{q^{nkg(t)}}{1-q^{nk}} \right] \leq0.
\end{multline*}
\end{proof}

\begin{lemma}\label{lem-gen:k-psi_minus_(p,q)-psi-further}
Assume that  $\lambda>0$,  $\mu>0$,  $k>0$, $p\in N$,  $q\in(0,1)$ and $g(t)>0$. Then,
\begin{equation*}\label{eqn-gen:k-psi_minus_(p,q)-psi-further} 
\mu \ln[p]_{q} - \frac{\lambda \ln k}{k} + \frac{\lambda \gamma}{k}   + \frac{\lambda }{g(t)}+\lambda \psi_k(g(t)) - \mu \psi_{(p,q)}(g(t)) >0.
\end{equation*}
\end{lemma}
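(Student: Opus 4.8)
The plan is to substitute the series representations~(\ref{eqn:series-k-psi-further}) and~(\ref{eqn:series-(p,q)-psi-further}) directly into the left-hand side and observe that the ``extra'' terms $-\frac{\lambda\ln k}{k}$, $\frac{\lambda\gamma}{k}$ and $\frac{\lambda}{g(t)}$ have been chosen precisely to annihilate the non-series terms appearing in $\lambda\psi_k(g(t))$, while the $\mu\ln[p]_q$ term cancels the leading term of $\mu\psi_{(p,q)}(g(t))$.

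Concretely, first I would expand
\[
\lambda\psi_k(g(t)) = \frac{\lambda(\ln k-\gamma)}{k} - \frac{\lambda}{g(t)} + \lambda\sum_{n=1}^{\infty}\frac{g(t)}{nk(nk+g(t))},
\]
so that the combination $-\frac{\lambda\ln k}{k}+\frac{\lambda\gamma}{k}+\frac{\lambda}{g(t)}+\lambda\psi_k(g(t))$ collapses exactly to $\lambda\sum_{n=1}^{\infty}\frac{g(t)}{nk(nk+g(t))}$. Similarly, $\mu\ln[p]_q - \mu\psi_{(p,q)}(g(t)) = -\mu(\ln q)\sum_{n=1}^{p}\frac{q^{ng(t)}}{1-q^{n}}$. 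Hence the entire left-hand side reduces to
\[
\lambda\sum_{n=1}^{\infty}\frac{g(t)}{nk(nk+g(t))} \;-\; \mu(\ln q)\sum_{n=1}^{p}\frac{q^{ng(t)}}{1-q^{n}}.
\]

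It then remains to note that both summands are strictly positive: the first because $\lambda>0$, $k>0$, $g(t)>0$ make every term of the series positive; the second because $q\in(0,1)$ forces $\ln q<0$ and $\frac{q^{ng(t)}}{1-q^{n}}>0$, so with $\mu>0$ the quantity $-\mu(\ln q)\sum_{n=1}^{p}\frac{q^{ng(t)}}{1-q^{n}}$ is positive. Adding two positive numbers gives a positive number, which establishes the strict inequality. There is no real obstacle here; the only point requiring any care is the bookkeeping of the cancellations, and in particular recognizing that the inequality is strict (rather than $\geq$, as in Lemmas~\ref{lem-gen:q-psi_minus_(p,q)-psi-further} and~\ref{lem-gen:q-psi_minus_(q,k)-psi-further}) precisely because the surviving series have strictly positive terms for every admissible value of the parameters.
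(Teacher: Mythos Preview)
Your proposal is correct and follows exactly the same route as the paper: substitute the series~(\ref{eqn:series-k-psi-further}) and~(\ref{eqn:series-(p,q)-psi-further}), observe the cancellations, and conclude positivity of the two surviving sums. The only difference is that you spell out the sign reasoning in more detail than the paper does.
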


\begin{proof}
From  ~(\ref{eqn:series-k-psi-further}) and ~(\ref{eqn:series-(p,q)-psi-further}) we have,
\begin{multline*}
\mu \ln[p]_{q} - \frac{\lambda \ln k}{k} + \frac{\lambda \gamma}{k}   + \frac{\lambda }{g(t)}+\lambda \psi_k(g(t)) - \mu \psi_{(p,q)}(g(t)) \\ = \lambda \sum_{n=1}^{\infty}\frac{g(t)}{nk(nk+g(t))} -\mu (\ln q) \sum_{n=1}^{p}\frac{q^{ng(t)}}{1-q^{n}}>0.
\end{multline*}
\end{proof}

\begin{lemma}\label{lem-gen:k-psi_minus_(q,k)-psi-further}
Assume that    $\lambda>0$,  $\mu>0$,  $q\in(0,1)$,  $k>0$  and $g(t)>0$. Then,
\begin{equation*}\label{eqn-gen:k-psi_minus_(q,k)-psi-further} 
\frac{\lambda \gamma}{k} + \frac{\lambda}{g(t)} - \frac{\ln(k^{\lambda}(1-q)^{\mu})}{k} +  \lambda \psi_k(g(t)) - \mu \psi_{(q,k)}(g(t))>0.
\end{equation*}
\end{lemma}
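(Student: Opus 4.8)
The plan is to mimic the proofs of the preceding three Lemmas verbatim: substitute the series characterizations ~(\ref{eqn:series-k-psi-further}) and ~(\ref{eqn:series-(q,k)-psi-further}) into the left-hand side and watch the elementary constants cancel. Concretely, from ~(\ref{eqn:series-k-psi-further}) we have $\lambda\psi_k(g(t)) = \frac{\lambda(\ln k-\gamma)}{k} - \frac{\lambda}{g(t)} + \lambda\sum_{n=1}^{\infty}\frac{g(t)}{nk(nk+g(t))}$, and from ~(\ref{eqn:series-(q,k)-psi-further}) we have $\mu\psi_{(q,k)}(g(t)) = \frac{-\mu\ln(1-q)}{k} + \mu(\ln q)\sum_{n=1}^{\infty}\frac{q^{nkg(t)}}{1-q^{nk}}$. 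Adding the prefactor constants $\frac{\lambda\gamma}{k} + \frac{\lambda}{g(t)} - \frac{\ln(k^{\lambda}(1-q)^{\mu})}{k}$ and noting $\ln(k^{\lambda}(1-q)^{\mu}) = \lambda\ln k + \mu\ln(1-q)$, I expect every non-series term to vanish.

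Thus the key computation reduces to the identity
\begin{multline*}
\frac{\lambda \gamma}{k} + \frac{\lambda}{g(t)} - \frac{\ln(k^{\lambda}(1-q)^{\mu})}{k} +  \lambda \psi_k(g(t)) - \mu \psi_{(q,k)}(g(t)) \\ = \lambda \sum_{n=1}^{\infty}\frac{g(t)}{nk(nk+g(t))} - \mu (\ln q) \sum_{n=1}^{\infty}\frac{q^{nkg(t)}}{1-q^{nk}}.
\end{multline*}
Now I argue the right-hand side is strictly positive. Since $g(t)>0$, $k>0$ and $\lambda>0$, each term $\frac{g(t)}{nk(nk+g(t))}$ is strictly positive, so the first sum is strictly positive (it even diverges, but positivity of the first term already suffices once we know the second sum is finite and nonnegative). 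For the second sum, $q\in(0,1)$ forces $\ln q<0$, while $q^{nkg(t)}>0$ and $1-q^{nk}>0$ for every $n\geq1$ since $0<q^{nk}<1$; hence $-\mu(\ln q)\frac{q^{nkg(t)}}{1-q^{nk}}>0$ for each $n$, making the second sum strictly positive as well. Therefore the whole expression is a sum of two strictly positive quantities and is $>0$, which is exactly the claimed inequality.

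I do not anticipate a genuine obstacle here: the only mild point of care is verifying that the additive constants genuinely telescope to zero when the series forms are plugged in — in particular keeping track of the signs in $\psi_k$ (the $-\gamma/k$ and $-1/g(t)$ pieces) and of the factorization $\ln(k^{\lambda}(1-q)^{\mu}) = \lambda\ln k + \mu\ln(1-q)$ so that the $\frac{\lambda\ln k}{k}$ and $\frac{\mu\ln(1-q)}{k}$ terms are cancelled with the correct sign. Once that bookkeeping is done, strict positivity of each remaining series is immediate from the sign conditions $\lambda,\mu>0$, $k>0$, $q\in(0,1)$, $g(t)>0$, and no monotonicity or deeper analytic input is needed.
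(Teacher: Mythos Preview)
Your proof is correct and is essentially identical to the paper's own argument: substitute the series forms (\ref{eqn:series-k-psi-further}) and (\ref{eqn:series-(q,k)-psi-further}), observe that all non-series constants cancel, and conclude positivity from the sign conditions on $\lambda,\mu,k,q,g(t)$. One small slip in a parenthetical remark: the first series $\sum_{n\ge1}\frac{g(t)}{nk(nk+g(t))}$ does \emph{not} diverge (its terms are $O(n^{-2})$), but this is irrelevant to the argument since only positivity is used.
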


\begin{proof}
From  ~(\ref{eqn:series-k-psi-further}) and ~(\ref{eqn:series-(q,k)-psi-further}) we have,
\begin{multline*}
\frac{\lambda \gamma}{k} + \frac{\lambda}{g(t)} - \frac{\ln(k^{\lambda}(1-q)^{\mu})}{k} +  \lambda \psi_k(g(t)) - \mu \psi_{(q,k)}(g(t)) \\ = \lambda \sum_{n=1}^{\infty}\frac{g(t)}{nk(nk+g(t))} -  \mu (\ln q) \sum_{n=1}^{\infty}\frac{q^{nkg(t)}}{1-q^{nk}}>0.
\end{multline*}
\end{proof}

\begin{theorem}\label{thm-gen:funct-q-gamma_(p,q)-gamma}
Let $g(t)$ be a positive, increasing and differentiable function,  $p\in N$ and $q\in(0,1)$. Then for positive real numbers $\lambda$ and $\mu$ such that $\lambda \geq \mu$, the inequalities: 
\begin{equation}\label{eqn-gen:ineq-ratio-1}
\frac{(1-q)^{\lambda(g(0)-g(x))}\Gamma_q(g(0))^\lambda}{ [p]^{-\mu(g(0)-g(x))}_{q}\Gamma_{(p,q)}(g(0))^\mu} \geq 
\frac{\Gamma_q(g(x))^\lambda}{\Gamma_{(p,q)}(g(x))^\mu} \geq
\frac{(1-q)^{\lambda(g(y)-g(x))}\Gamma_q(g(y))^\lambda}{ [p]^{-\mu(g(y)-g(x))}_{q}\Gamma_{(p,q)}(g(y))^\mu}
\end{equation}
are valid for $0<x<y$. 
\end{theorem}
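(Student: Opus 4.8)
The plan is to reduce both halves of \eqref{eqn-gen:ineq-ratio-1} to the monotonicity of a single auxiliary function, then to differentiate and invoke Lemma~\ref{lem-gen:q-psi_minus_(p,q)-psi-further}.

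First I would take logarithms throughout \eqref{eqn-gen:ineq-ratio-1}. Writing $\ln [p]^{-\mu(g(0)-g(x))}_{q} = -\mu(g(0)-g(x))\ln[p]_q$ for the factor appearing in the denominators (and similarly with $g(y)$ in place of $g(0)$), one checks that the whole chain is equivalent to
\[
\phi(0) \;\geq\; \phi(x) \;\geq\; \phi(y),
\]
where
\[
\phi(t) \;=\; \lambda \ln \Gamma_q(g(t)) \;-\; \mu \ln \Gamma_{(p,q)}(g(t)) \;+\; \bigl(\lambda \ln(1-q) + \mu \ln[p]_q\bigr)\, g(t).
\]
Indeed, setting $h(t)=\lambda\ln\Gamma_q(g(t))-\mu\ln\Gamma_{(p,q)}(g(t))$, the right-hand inequality in \eqref{eqn-gen:ineq-ratio-1} is $h(x)-h(y)\geq (g(y)-g(x))\bigl(\lambda\ln(1-q)+\mu\ln[p]_q\bigr)$, which rearranges to $\phi(x)\geq\phi(y)$, and the left-hand inequality likewise rearranges to $\phi(0)\geq\phi(x)$. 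Hence it suffices to show that $\phi$ is non-increasing on $[0,\infty)$.

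Next I would differentiate. Since $\Gamma_q$ and $\Gamma_{(p,q)}$ are differentiable with $\psi_q=(\ln\Gamma_q)'$ and $\psi_{(p,q)}=(\ln\Gamma_{(p,q)})'$, the chain rule gives
\[
\phi'(t) \;=\; g'(t)\Bigl[\lambda \ln(1-q) + \mu \ln[p]_q + \lambda\psi_q(g(t)) - \mu\psi_{(p,q)}(g(t))\Bigr].
\]
Because $g$ is increasing, $g'(t)\geq 0$; and the bracketed expression is precisely the quantity that Lemma~\ref{lem-gen:q-psi_minus_(p,q)-psi-further} shows to be $\leq 0$ (the hypotheses $\lambda\geq\mu>0$, $p\in\mathbb{N}$, $q\in(0,1)$, $g(t)>0$ are all in force). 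Therefore $\phi'(t)\leq 0$, so $\phi$ is non-increasing, and for $0<x<y$ we obtain $\phi(0)\geq\phi(x)\geq\phi(y)$. Exponentiating restores \eqref{eqn-gen:ineq-ratio-1}.

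I do not expect a genuine obstacle here. The only delicate point is the algebra of exponents when passing to logarithms — in particular handling the negative exponent on $[p]_q$ in the denominators and making sure the coefficient of $g(t)$ in $\phi$ carries exactly the sign pattern appearing in Lemma~\ref{lem-gen:q-psi_minus_(p,q)-psi-further}. Once $\phi$ is assembled correctly, the monotonicity step is immediate from that lemma together with $g'\geq 0$.
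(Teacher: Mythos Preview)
Your proof is correct and essentially identical to the paper's: your auxiliary function $\phi$ coincides with the paper's $u(t)=\ln G(t)$, where $G(t)=(1-q)^{\lambda g(t)}[p]_q^{\mu g(t)}\Gamma_q(g(t))^\lambda/\Gamma_{(p,q)}(g(t))^\mu$, and both arguments conclude by computing $\phi'=u'$ and applying Lemma~\ref{lem-gen:q-psi_minus_(p,q)-psi-further} together with $g'\geq 0$. The only difference is presentational---the paper introduces $G$ first and then takes logarithms, whereas you take logarithms of the target inequality directly.
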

\begin{proof}
Define a function $G$ for $p \in N$ and $q\in(0,1)$ by 
\begin{equation*}\label{eqn-gen:funct-q-gamma_(p,q)-gamma} 
G(t)=\frac{(1-q)^{\lambda g(t)}\Gamma_q(g(t))^\lambda}{ [p]^{-\mu g(t)}_{q}\Gamma_{(p,q)}(g(t))^\mu}, \quad t\in (0,\infty).
\end{equation*}
Let $u(t)=\ln G(t)$. Then,
\begin{align*}
u(t) &=\ln \frac{(1-q)^{\lambda g(t)}\Gamma_q(g(t))^\lambda}{ [p]^{-\mu g(t)}_{q}\Gamma_{(p,q)}(g(t))^\mu} \\
&= \lambda g(t)\ln(1-q) + \mu g(t) \ln [p]_{q}+ \lambda \ln \Gamma_q(g(t)) - \mu \ln \Gamma_{(p,q)}(g(t)).\\
\text{Then,} &  \\
u'(t)&=\lambda g'(t) \ln(1-q)+ \mu g'(t) \ln[p]_{q} + \lambda g'(t) \psi_q(g(t)) - \mu g'(t) \psi_{(p,q)}(g(t))\\
      &=g'(t) \left[ \lambda \ln(1-q)+ \mu \ln[p]_{q} + \lambda \psi_q(g(t)) - \mu \psi_{(p,q)}(g(t)) \right] \leq 0
\end{align*}
\noindent
as a consequence of Lemma~\ref{lem-gen:q-psi_minus_(p,q)-psi-further}. That implies $u$ is non-increasing on $t\in(0,\infty)$. Hence $G=e^{u(t)}$ is non-increasing  and  for  $0<x<y$ we have, 
\begin{equation*}
G(0) \geq G(x) \geq G(y) 
\end{equation*}
establishing the result.
\end{proof}

\begin{theorem}\label{thm-gen:funct-q-gamma_(q,k)-gamma}
Let $g(t)$ be a positive, increasing and differentiable function,   $q\in(0,1)$ and $k\geq1$. Then for positive real numbers $\lambda$ and $\mu$ such that $\lambda \geq \mu$, the inequalities: 
\begin{equation}\label{eqn-gen:ineq-ratio-2}
\frac{(1-q)^{\lambda(g(0)-g(x))} \Gamma_q(g(0))^\lambda}{ (1-q)^{\frac{\mu}{k}(g(0)-g(x))}\Gamma_{(q,k)}(g(0))^\mu} \geq 
\frac{\Gamma_q(g(x))^\lambda}{\Gamma_{(q,k)}(g(x))^\mu} \geq
\frac{(1-q)^{\lambda(g(y)-g(x))} \Gamma_q(g(y))^\lambda}{ (1-q)^{\frac{\mu}{k}(g(y)-g(x))}\Gamma_{(q,k)}(g(y))^\mu}  
\end{equation}
are valid for $0<x<y$. 
\end{theorem}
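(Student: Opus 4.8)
The plan is to mirror the proof of Theorem~\ref{thm-gen:funct-q-gamma_(p,q)-gamma} exactly, replacing the ingredient Lemma~\ref{lem-gen:q-psi_minus_(p,q)-psi-further} by Lemma~\ref{lem-gen:q-psi_minus_(q,k)-psi-further}. First I would define the auxiliary function
\begin{equation*}
H(t)=\frac{(1-q)^{\lambda g(t)}\Gamma_q(g(t))^\lambda}{(1-q)^{\frac{\mu}{k} g(t)}\Gamma_{(q,k)}(g(t))^\mu}, \qquad t\in(0,\infty),
\end{equation*}
and set $v(t)=\ln H(t)$, so that
\begin{equation*}
v(t)=\lambda g(t)\ln(1-q) - \frac{\mu}{k} g(t)\ln(1-q) + \lambda \ln\Gamma_q(g(t)) - \mu \ln\Gamma_{(q,k)}(g(t)).
\end{equation*}

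Next I would differentiate, using $\frac{d}{dt}\ln\Gamma_q(g(t))=g'(t)\psi_q(g(t))$ and $\frac{d}{dt}\ln\Gamma_{(q,k)}(g(t))=g'(t)\psi_{(q,k)}(g(t))$, to obtain
\begin{equation*}
v'(t)=g'(t)\left[\lambda \ln(1-q) - \frac{\mu \ln(1-q)}{k} + \lambda \psi_q(g(t)) - \mu \psi_{(q,k)}(g(t))\right].
\end{equation*}
Since $g$ is increasing we have $g'(t)\geq 0$, and since $g(t)>0$, $q\in(0,1)$, $k\geq 1$ and $\lambda\geq\mu>0$, the bracketed quantity is $\leq 0$ by Lemma~\ref{lem-gen:q-psi_minus_(q,k)-psi-further}. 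Hence $v'(t)\leq 0$, so $v$ is non-increasing on $(0,\infty)$, and therefore $H=e^{v}$ is non-increasing as well.

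Finally, for $0<x<y$ the monotonicity of $H$ gives $H(0)\geq H(x)\geq H(y)$; writing these three quantities out and rearranging the powers of $(1-q)$ yields precisely the chain of inequalities~(\ref{eqn-gen:ineq-ratio-2}), which completes the proof. I do not anticipate a real obstacle here: the only substantive point is the sign of the bracket, and that is already packaged in Lemma~\ref{lem-gen:q-psi_minus_(q,k)-psi-further}; the remaining work is the bookkeeping of combining the $(1-q)^{\lambda g(\cdot)}$ and $(1-q)^{\frac{\mu}{k} g(\cdot)}$ factors from $H(x)$, $H(y)$, $H(0)$ into the stated differences $g(0)-g(x)$ and $g(y)-g(x)$ in the exponents.
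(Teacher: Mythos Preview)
Your proposal is correct and follows essentially the same approach as the paper: you define the same auxiliary function $H$, take its logarithm $v$, differentiate, apply Lemma~\ref{lem-gen:q-psi_minus_(q,k)-psi-further} to conclude $v'\leq 0$, and deduce $H(0)\geq H(x)\geq H(y)$. The paper's proof is identical in structure and detail.
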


\begin{proof}
Define a function $H$ for $q\in(0,1)$ and $k\geq1$ by 
\begin{equation*}\label{eqn-gen:funct-q-gamma_(q,k)-gamma} 
H(t)=\frac{(1-q)^{\lambda g(t)}\Gamma_q(g(t))^\lambda}{ (1-q)^{\frac{\mu g(t)}{k}}\Gamma_{(q,k)}(g(t))^\mu}, \quad t\in (0,\infty).
\end{equation*}
Let $v(t)=\ln H(t)$. Then,
\begin{align*}
v(t) &=\ln \frac{(1-q)^{\lambda g(t)}\Gamma_q(g(t))^\lambda}{ (1-q)^{\frac{\mu g(t)}{k}}\Gamma_{(q,k)}(g(t))^\mu}\\
&=\lambda g(t) \ln(1-q) - \frac{\mu g(t)}{k}\ln(1-q)+ \lambda \ln \Gamma_q(g(t)) - \mu \ln \Gamma_{(q,k)}(g(t)).\\
\text{Then,} &  \\
v'(t)&=\lambda g'(t) \ln(1-q) - \frac{\mu g'(t)}{k}\ln(1-q) + \lambda g'(t) \psi_q(g(t)) - \mu g'(t) \psi_{(q,k)}(g(t)) \\
       &=g'(t) \left[ \lambda \ln(1-q) - \mu \frac{\ln(1-q) }{k}+ \lambda \psi_q(g(t)) - \mu \psi_{(q,k)}(g(t)) \right] \leq 0
\end{align*}
\noindent
as a result of Lemma~\ref{lem-gen:q-psi_minus_(q,k)-psi-further}.  That implies $v$ is non-increasing on $t\in(0,\infty)$. Hence $H=e^{v(t)}$ is non-increasing  and  for  $0<x<y$ we have, 
\begin{equation*}
H(0) \geq H(x) \geq H(y) 
\end{equation*}
yielding the result.
\end{proof}

\begin{theorem}\label{thm-gen:funct-k-gamma_(p,q)-gamma}
Let $g(t)$ be a positive, increasing and differentiable function,   $k>0$, $p\in N$ and  $q\in(0,1)$. Then for positive real numbers $\lambda$ and $\mu$, the inequalities: 
\begin{multline}\label{eqn-gen:ineq-ratio-3} 
\frac{(g(0))^{\lambda} k^{-\frac{\lambda}{k}(g(0)-g(x))}e^{\frac{\lambda \gamma}{k}(g(0)-g(x))}\Gamma_k(g(0))^\lambda}{(g(x))^{\lambda} [p]^{-\mu(g(0)-g(x))}_{q}\Gamma_{(p,q)}(g(0))^\mu} < 
\frac{\Gamma_k(g(x))^\lambda}{\Gamma_{(p,q)}(g(x))^\mu} \\ <
\frac{(g(y))^{\lambda} k^{-\frac{\lambda}{k}(g(y)-g(x))}e^{\frac{\lambda \gamma}{k}(g(y)-g(x))}\Gamma_k(g(y))^\lambda}{(g(x))^{\lambda} [p]^{-\mu(g(y)-g(x))}_{q}\Gamma_{(p,q)}(g(y))^\mu} 
\end{multline}
are valid for $0<x<y$. 
\end{theorem}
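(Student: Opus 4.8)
The plan is to imitate the proofs of Theorems~\ref{thm-gen:funct-q-gamma_(p,q)-gamma} and~\ref{thm-gen:funct-q-gamma_(q,k)-gamma}, replacing the auxiliary inequality by Lemma~\ref{lem-gen:k-psi_minus_(p,q)-psi-further}. Concretely, I would define, for $k>0$, $p\in N$ and $q\in(0,1)$, the function
\[
F(t)=\frac{(g(t))^{\lambda}\,k^{-\frac{\lambda g(t)}{k}}\,e^{\frac{\lambda\gamma g(t)}{k}}\,\Gamma_k(g(t))^{\lambda}}{[p]^{-\mu g(t)}_{q}\,\Gamma_{(p,q)}(g(t))^{\mu}},\qquad t\in(0,\infty),
\]
and set $w(t)=\ln F(t)$. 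Using $\psi_k=(\ln\Gamma_k)'$ and $\psi_{(p,q)}=(\ln\Gamma_{(p,q)})'$ and expanding the logarithm,
\[
w(t)=\lambda\ln g(t)-\frac{\lambda g(t)\ln k}{k}+\frac{\lambda\gamma g(t)}{k}+\lambda\ln\Gamma_k(g(t))+\mu g(t)\ln[p]_q-\mu\ln\Gamma_{(p,q)}(g(t)).
\]

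Next I would differentiate and pull out the common factor $g'(t)$, obtaining
\[
w'(t)=g'(t)\left[\frac{\lambda}{g(t)}-\frac{\lambda\ln k}{k}+\frac{\lambda\gamma}{k}+\lambda\psi_k(g(t))+\mu\ln[p]_q-\mu\psi_{(p,q)}(g(t))\right].
\]
The bracketed quantity is precisely the left-hand side of the inequality in Lemma~\ref{lem-gen:k-psi_minus_(p,q)-psi-further}, hence it is strictly positive; since $g$ is increasing, $g'(t)>0$, so $w'(t)>0$ on $(0,\infty)$. Therefore $w$, and with it $F=e^{w}$, is strictly increasing, whence $F(0)<F(x)<F(y)$ whenever $0<x<y$.

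Finally, I would convert these two strict inequalities into~(\ref{eqn-gen:ineq-ratio-3}) by a purely algebraic manipulation: dividing $F(0)<F(x)$ and $F(x)<F(y)$ through by the positive quantity $\dfrac{(g(x))^{\lambda}k^{-\frac{\lambda g(x)}{k}}e^{\frac{\lambda\gamma g(x)}{k}}}{[p]^{-\mu g(x)}_{q}}$ collapses the central expression to $\Gamma_k(g(x))^{\lambda}/\Gamma_{(p,q)}(g(x))^{\mu}$ and leaves the stated differences $g(0)-g(x)$ and $g(y)-g(x)$ in the exponents on the two sides. I do not anticipate any genuine difficulty; the only point requiring care is the bookkeeping that makes the factors $(g(t))^{\lambda}$, $k^{-\lambda g(t)/k}$, $e^{\lambda\gamma g(t)/k}$ and $[p]^{-\mu g(t)}_{q}$ fit together so that differentiating $w$ reproduces exactly the expression of Lemma~\ref{lem-gen:k-psi_minus_(p,q)-psi-further}. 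In particular, the term $\lambda/g(t)$ must appear in $w'(t)$, which is the reason for including the factor $(g(t))^{\lambda}$ in $F$.
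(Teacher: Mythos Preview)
Your proposal is correct and follows exactly the paper's own argument: the paper defines the same auxiliary function (calling it $S$ rather than $F$), takes its logarithm $w$, differentiates, factors out $g'(t)$, applies Lemma~\ref{lem-gen:k-psi_minus_(p,q)-psi-further} to the bracket, and concludes $S(0)<S(x)<S(y)$. Your write-up is in fact slightly more detailed than the paper's, since you spell out the final algebraic normalization that turns $F(0)<F(x)<F(y)$ into the displayed inequalities~(\ref{eqn-gen:ineq-ratio-3}).
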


\begin{proof}
Define a function $S$ for $k>0$, $p \in N$ and $q\in(0,1)$  by 
\begin{equation*}\label{eqn-gen:funct-k-gamma_(p,q)-gamma} 
S(t)=\frac{(g(t))^{\lambda}k^{-\frac{\lambda g(t)}{k}}e^{\frac{\lambda \gamma g(t)}{k}}\Gamma_k(g(t))^\lambda}{ [p]^{-\mu g(t)}_{q}\Gamma_{(p,q)}(g(t))^\mu}, \quad t\in (0,\infty).
\end{equation*}
Let $w(t)=\ln S(t)$. Then,
\begin{align*}
w(t) &=\ln \frac{(g(t))^{\lambda}k^{-\frac{\lambda g(t)}{k}}e^{\frac{\lambda \gamma g(t)}{k}}\Gamma_k(g(t))^\lambda}{ [p]^{-\mu g(t)}_{q}\Gamma_{(p,q)}(g(t))^\mu}\\
&= \mu g(t)\ln [p]_{q} - \frac{\lambda g(t)}{k}\ln k  + \frac{\lambda \gamma g(t)}{k} + \lambda \ln(g(t))  \\
& \quad + \lambda \ln \Gamma_k(g(t)) - \mu \ln \Gamma_{(p,q)}(g(t)).\\
\text{Then,}& \\
w'(t)&= \mu g'(t) \ln[p]_{q} -  \frac{\lambda g'(t) \ln k}{k} + \frac{\lambda \gamma g'(t)}{k}   + \lambda \frac{g'(t)}{g(t)}\\
& \quad + \lambda g'(t) \psi_k(g(t)) - \mu g'(t)\psi_{(p,q)}(g(t))\\
&=g'(t)  \left[ \mu \ln[p]_{q} - \frac{\lambda \ln k}{k} + \frac{\lambda \gamma}{k}   + \frac{\lambda}{g(t)}+ \lambda \psi_k(g(t)) - \mu \psi_{(p,q)}(g(t))  \right]>0 
\end{align*}
\noindent
as a result of  Lemma~\ref{lem-gen:k-psi_minus_(p,q)-psi-further}. That implies $w$ is increasing on $t\in(0,\infty)$. Hence $S=e^{w(t)}$ is increasing  and  for  $0<x<y$ we have, 
\begin{equation*}
S(0) < S(x) < S(y) 
\end{equation*}
completing the proof.
\end{proof}

\begin{theorem}\label{thm-gen:funct-k-gamma_(q,k)-gamma}
Let $g(t)$ be a positive, increasing and differentiable function,   $k>0$ and  $q\in(0,1)$. Then for positive real numbers $\lambda$ and $\mu$, the inequalities: 

\begin{multline}\label{eqn-gen:ineq-ratio-4}
\frac{(g(0))^{\lambda}  e^{\frac{\lambda \gamma }{k}(g(0)-g(x))} \Gamma_k(g(0))^\lambda}{ (g(x))^{\lambda} k^{\frac{\lambda}{k}(g(0)-g(x))}(1-q)^{\frac{\mu}{k}(g(0)-g(x))}\Gamma_{(q,k)}(g(0))^\mu} < 
\frac{\Gamma_k(g(x))^\lambda}{\Gamma_{(q,k)}(g(x))^\mu}\\  <
\frac{(g(y))^{\lambda}  e^{\frac{\lambda \gamma }{k}(g(y)-g(x))} \Gamma_k(g(y))^\lambda}{ (g(x))^{\lambda} k^{\frac{\lambda}{k}(g(y)-g(x))}(1-q)^{\frac{\mu}{k}(g(y)-g(x))}\Gamma_{(q,k)}(g(y))^\mu} 
\end{multline}
are valid for $0<x<y$. 
\end{theorem}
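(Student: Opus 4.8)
The plan is to mirror exactly the structure of the three preceding theorems, since Theorem~\ref{thm-gen:funct-k-gamma_(q,k)-gamma} stands in the same relation to Lemma~\ref{lem-gen:k-psi_minus_(q,k)-psi-further} as Theorem~\ref{thm-gen:funct-k-gamma_(p,q)-gamma} does to Lemma~\ref{lem-gen:k-psi_minus_(p,q)-psi-further}. First I would introduce the auxiliary function
\begin{equation*}
T(t)=\frac{(g(t))^{\lambda}\, e^{\frac{\lambda \gamma g(t)}{k}}\,\Gamma_k(g(t))^{\lambda}}{k^{\frac{\lambda g(t)}{k}}(1-q)^{\frac{\mu g(t)}{k}}\,\Gamma_{(q,k)}(g(t))^{\mu}},\qquad t\in(0,\infty),
\end{equation*}
which is well defined and positive because $g(t)>0$, $\Gamma_k>0$, $\Gamma_{(q,k)}>0$. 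Note that $T(t)$ is designed so that evaluating the ratio $T(0)/T(x)$ and $T(y)/T(x)$ reproduces the outer two expressions in \eqref{eqn-gen:ineq-ratio-4}: the $g(t)^{\lambda}$ factors cancel down to $(g(0))^{\lambda}/(g(x))^{\lambda}$ etc., and the exponents of $k$, $(1-q)$ and $e$ combine into the differences $g(0)-g(x)$, $g(y)-g(x)$.

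Next I would set $z(t)=\ln T(t)$ and differentiate. Taking logarithms turns the product into
\begin{equation*}
z(t)=\lambda\ln(g(t))+\frac{\lambda\gamma g(t)}{k}-\frac{\lambda g(t)}{k}\ln k-\frac{\mu g(t)}{k}\ln(1-q)+\lambda\ln\Gamma_k(g(t))-\mu\ln\Gamma_{(q,k)}(g(t)),
\end{equation*}
and then, using $\psi_k=(\ln\Gamma_k)'$ and $\psi_{(q,k)}=(\ln\Gamma_{(q,k)})'$ together with the chain rule, one gets
\begin{equation*}
z'(t)=g'(t)\left[\frac{\lambda}{g(t)}+\frac{\lambda\gamma}{k}-\frac{\ln\!\big(k^{\lambda}(1-q)^{\mu}\big)}{k}+\lambda\psi_k(g(t))-\mu\psi_{(q,k)}(g(t))\right].
\end{equation*}
Since $g$ is increasing and differentiable we have $g'(t)\ge 0$, and the bracketed factor is strictly positive by Lemma~\ref{lem-gen:k-psi_minus_(q,k)-psi-further} applied with the given $\lambda,\mu,q,k$ and the positive function $g$. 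Hence $z'(t)>0$ wherever $g'(t)>0$, so $z$ is increasing on $(0,\infty)$, and therefore $T=e^{z}$ is increasing there.

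The final step is to read off the inequality: for $0<x<y$ monotonicity gives $T(0)<T(x)<T(y)$, and dividing through by $T(x)$ and substituting the closed form of $T$ yields precisely \eqref{eqn-gen:ineq-ratio-4}. I expect no real obstacle here — the whole argument is a transcription of the pattern already established, and the only mildly delicate point is bookkeeping with the exponents, namely checking that $k^{-\frac{\lambda g(t)}{k}}$ and $(1-q)^{-\frac{\mu g(t)}{k}}$ recombine correctly so that the stated factors $k^{\frac{\lambda}{k}(g(y)-g(x))}$ and $(1-q)^{\frac{\mu}{k}(g(y)-g(x))}$ appear with the right sign in the denominators of the outer terms. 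One should also remark, as in the earlier proofs, that strict inequality is retained because Lemma~\ref{lem-gen:k-psi_minus_(q,k)-psi-further} gives a strict inequality; if $g'$ vanishes on a subinterval the monotonicity is still non-strict there, but at the endpoints $0<x<y$ the inequality remains strict provided $g$ is genuinely increasing, which is the standing hypothesis.
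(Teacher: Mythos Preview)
Your proposal is correct and follows essentially the same approach as the paper's own proof: define the auxiliary function $T(t)$, take its logarithm, differentiate, apply Lemma~\ref{lem-gen:k-psi_minus_(q,k)-psi-further} to get positivity of the bracketed factor, and conclude $T(0)<T(x)<T(y)$ for $0<x<y$. The only differences are cosmetic (the paper names the logarithm $\delta$ rather than $z$) and your added remarks on strictness and exponent bookkeeping, which go slightly beyond what the paper spells out.
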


\begin{proof}
Define a function $T$ for  $k>0$ and $q\in(0,1)$ by 
\begin{equation*}\label{eqn-gen:funct-k-gamma_(q,k)-gamma} 
T(t)=\frac{(g(t))^{\lambda} e^{\frac{\lambda \gamma g(t)}{k}}\Gamma_k(g(t))^\lambda}{ k^{\frac{\lambda g(t)}{k}}(1-q)^{\frac{\mu g(t)}{k}}\Gamma_{(q,k)}(g(t))^\mu}, \quad t\in (0,\infty).
\end{equation*}
Let $\delta(t)=\ln T(t)$. Then,
\begin{align*}
\delta(t) &=\ln \frac{(g(t))^{\lambda} e^{\frac{\lambda \gamma g(t)}{k}}\Gamma_k(g(t))^\lambda}{ k^{\frac{\lambda g(t)}{k}}(1-q)^{\frac{\mu g(t)}{k}}\Gamma_{(q,k)}(g(t))^\mu} \\
&=\lambda \ln(g(t)) + \frac{\lambda \gamma g(t)}{k} - \frac{\lambda g(t)}{k}\ln k - \frac{\mu g(t)}{k}\ln(1-q) \\
&\quad + \lambda \ln \Gamma_k(g(t))  - \mu \ln \Gamma_{(q,k)}(g(t)). \\
 \text{Then,}& \\
\delta'(t)&=\frac{\lambda \gamma g'(t)}{k} + \lambda \frac{g'(t)}{g(t)}- \frac{g'(t) \ln(k^{\lambda}(1-q)^{\mu})}{k} +  \lambda g'(t) \psi_k(g(t)) - \mu g'(t) \psi_{(q,k)}(g(t))\\
&=g'(t)  \left[\frac{\lambda \gamma}{k} + \frac{\lambda}{g(t)} -\frac{\ln(k^{\lambda}(1-q)^{\mu})}{k} +  \lambda \psi_k(g(t)) - \mu \psi_{(q,k)}(g(t)) \right]>0
\end{align*}
\noindent
as a result of Lemma~\ref{lem-gen:k-psi_minus_(q,k)-psi-further}. That implies $\delta$ is increasing on $t\in(0,\infty)$.  Hence $T=e^{\delta(t)}$ is increasing  and  for  $0<x<y$ we have,  
\begin{equation*}
T(0) < T(x) < T(y) 
\end{equation*}
and the result follows.
\end{proof}

\section{Concluding Remarks}
\noindent
In particular, if we let $g(t)=\alpha+\beta t$ on the interval $0<t<1$, then we recover the entire results of \cite{K-Nantomah-14-12-aug}. Also, by setting $g(t)=\alpha + t$ and $\lambda = \mu = 1$ on $0<t<1$, we obtain the results of \cite{K-Nantomah-14-malaya}. The results \cite{Nantomah-Iddrisu-2014-aug} - \cite{K-Nantomah-14-12-aug} are therefore special cases of the results of this paper.

\section{Open Problems}
\noindent
For $k>0$, $p\in N$ and $q\in(0,1)$, let  $\psi_p(t)$, $\psi_q(t)$, $\psi_{(p,q)}(t)$ and $\psi_{(q,k)}(t)$ respectively be the $p$, $q$, $(p,q)$ and $(q,k)$ analogues of the classical psi function, $\psi(t)$. \\
\noindent
\textbf{Problem 1:} Under what conditions will the  statements:
\begin{equation*}
\ln p + \ln(1-q) + \psi_q(t) - \psi_{p}(t) = \sum_{n=0}^{p}\frac{1}{n+t} +  (\ln q) \sum_{n=1}^{\infty}\frac{q^{nt}}{1-q^{n}}\leq(\geq)0
\end{equation*}
be valid?\\
\noindent
\textbf{Problem 2:} Under what conditions will the  statements:
\begin{equation*}
-\ln [p]_q -\frac{\ln(1-q)}{k} + \psi_{(p,q)}(t) - \psi_{(q,k)}(t) = (\ln q) \left[\sum_{n=1}^{p}\frac{q^{nt}}{1-q^{n}} -   \sum_{n=1}^{\infty}\frac{q^{nkt}}{1-q^{nk}} \right] \leq(\geq)0
\end{equation*}
be valid?

\bibliographystyle{plain}


\end{document}